\documentclass[%
  a4paper,
  fontsize=11pt,
  oneside,
  numbers=enddot,
]{scrartcl}
\KOMAoptions{DIV=12}					
\usepackage[obeyspaces,hyphens,spaces]{url}
\tolerance  720

\usepackage[fleqn]{amsmath}
\usepackage{amsthm}
\frenchspacing						
\allowdisplaybreaks[1]              

\usepackage[T1]{fontenc}
\usepackage{textcomp}
\usepackage{amsfonts}
\usepackage{amssymb}
\usepackage[proportional]{libertine}
\usepackage[%
  libertine,
  slantedGreek,
  nosymbolsc,
  nonewtxmathopt,
  subscriptcorrection,
]{newtxmath}
\usepackage[scaled=0.95,varqu,varl]{inconsolata}

\usepackage[scr=boondoxo]{mathalpha}
\usepackage{euscript}               

\usepackage[svgnames]{xcolor}
\definecolor{structure}{HTML}{0455BF}
\definecolor{dblue}{HTML}{0455BF}
\definecolor{dgreen}{HTML}{02724A}
\definecolor{dred}{HTML}{D90404}
\definecolor{dviolet}{HTML}{42208C}
\definecolor{labelkey}{HTML}{0455BF}
\definecolor{refkey}{HTML}{0455BF}

\addtokomafont{section}{\color{structure}\centering}
\addtokomafont{subsection}{\color{structure}}
\addtokomafont{title}{\color{structure}}

\usepackage[headsepline]{scrlayer-scrpage}
\addtokomafont{headsepline}{\color{structure}}
\addtokomafont{pageheadfoot}{\color{structure}}
\addtokomafont{pagenumber}{\color{structure}}
\pagestyle{scrheadings}
\makeatletter
  \ihead{\textup{arXiv: 2501.11662v3}}
  \ohead{\pagemark}
  \cfoot{}
\makeatother

\usepackage{enumitem}

\setlist{itemsep=-1.5pt}

\usepackage{upref}  
\usepackage{hyperref}
\hypersetup{%
  colorlinks=true,
  linktocpage=true,
  linkcolor=dgreen,
  citecolor=dgreen,
  urlcolor=dred,
  pdfencoding=auto,
  hypertexnames=false,
}

\usepackage[capitalize,nosort,nameinlink]{cleveref}
\crefdefaultlabelformat{#2\textup{#1}#3}
\crefrangeformat{enumi}{#3\textup{#1}#4\textup{\textendash}#5\textup{#2}#6}
\crefname{theorem}{Theorem}{Theorems}
\crefname{proposition}{Proposition}{Propositions}
\crefname{corollary}{Corollary}{Corollaries}
\crefname{lemma}{Lemma}{Lemmas}
\crefname{fact}{Fact}{Facts}
\crefname{definition}{Definition}{Definitions}
\crefname{example}{Example}{Examples}
\crefname{assumption}{Assumption}{Assumptions}
\crefname{remark}{Remark}{Remarks}
\crefname{notation}{Notation}{Notations}
\crefname{item}{}{}
\crefname{enumi}{}{}
\crefname{equation}{}{}

\makeatletter
  \g@addto@macro\th@plain{%
    \thm@headfont{\color{structure}\scshape\bfseries}
    \thm@notefont{\normalfont}
  }
  \g@addto@macro\th@definition{%
    \thm@headfont{\color{structure}\scshape\bfseries}
    \thm@notefont{\normalfont}
  }
  \g@addto@macro\th@remark{
    \thm@headfont{\color{structure}\scshape\bfseries}
    \thm@notefont{\normalfont}
  }
\makeatother

\theoremstyle{plain}
\newtheorem{theorem}{Theorem}

\newtheorem{corollary}[theorem]{Corollary}
\newtheorem{lemma}[theorem]{Lemma}

\theoremstyle{definition}

\newtheorem{notation}[theorem]{Notation}
\newtheorem{example}[theorem]{Example}

\theoremstyle{remark}
\newtheorem{remark}[theorem]{Remark}


\usepackage[extdef=true]{delimset}
\DeclareMathDelimiterSet{\scal}[2]{
  \selectdelim[l]< {#1} \mathpunct{}\selectdelim[p]| {#2} \selectdelim[r]>
}
\renewcommand{\pair}{\delimpair<{[.],}>}
\newcommand{\menge}[2]{
  \bigl\{{#1}\mid{#2}\bigr\}
  }
\DeclareMathDelimiterSet{\Menge}[2]{
  \selectdelim[l]\{{#1}\selectdelim[m]|{#2}\selectdelim[r]\}
  }
\newcommand*\Cdot{{\mkern 1.6mu\cdot\mkern 1.6mu}}


\newcommand{\RR}{\mathbb{R}}

\newcommand{\HH}{\mathcal{H}}
\newcommand{\XX}{\mathcal{X}}
\newcommand{\YY}{\mathcal{Y}}
\newcommand{\GG}{\mathcal{G}}
\newcommand{\HHH}{\boldsymbol{\mathcal{H}}}

\newcommand{\kt}{\boldsymbol{\EuScript{K}}}

\newcommand{\pinf}{{+}\infty}
\newcommand{\minf}{{-}\infty}
\newcommand{\emp}{\varnothing}
\newcommand{\exi}{\exists\,}

\renewcommand{\geq}{\geqslant}


\newcommand{\RX}{\intv[l]{\minf}{\pinf}}

\DeclareMathOperator{\ran}{ran}
\DeclareMathOperator{\cran}{\overline{ran}}

\DeclareMathOperator{\inte}{int}


\newcommand{\Id}{\mathrm{Id}}

\DeclareMathOperator{\conv}{conv}
\DeclareMathOperator{\cconv}{\overline{conv}}
\DeclareMathOperator{\caff}{\overline{aff}}
\DeclareMathOperator{\aff}{aff}
\DeclareMathOperator{\dom}{dom}

\DeclareMathOperator{\gra}{gra}
\DeclareMathOperator{\rint}{rint}

\renewcommand\abstractname{Abstract}

\newcommand\mscname{2020 Mathematics Subject Classification}
\renewenvironment{abstract}{%
  \vspace*{-0.50cm}
  \small
  \quotation%
  \noindent%
  {\normalfont\scshape\bfseries
    \nobreak\textcolor{structure}{\abstractname.}\ }%
  }{%
  \endquotation%
  }
\newenvironment{keywords}
{\renewcommand\abstractname{\keywordsname}\begin{abstract}}
{\end{abstract}}
\newenvironment{msc}
{\renewcommand\abstractname{\mscname}\begin{abstract}}
{\end{abstract}}

\usepackage[%
  sorting=nyvt,
  style=ext-numeric,
  giveninits=true,
  articlein = false,
]{biblatex}
\addbibresource{bibli.bib}
\DeclareFieldFormat[article,inbook,inproceedings]{title}{#1\isdot}
\DeclareFieldFormat[article]{eid}{art.~#1}
\DeclareFieldFormat{edition}{#1~ed\isdot}
\renewbibmacro*{publisher+location+date}{%
  \printlist{publisher}%
  \iflistundef{location}
  {\setunit*{\addcomma\space}}
  {\setunit*{\addcomma\space}}%
  \printlist{location}%
  \setunit*{\addcomma\space}%
  \usebibmacro{date}%
  \newunit}
\AtEveryBibitem{%
  \clearfield{note}%
  }
\renewbibmacro*{maintitle+title}{%
  \iffieldsequal{maintitle}{title}
    {\clearfield{maintitle}%
     \clearfield{mainsubtitle}%
     \clearfield{maintitleaddon}}
    {\iffieldundef{maintitle}
       {}
       {\usebibmacro{maintitle}%
    \newunit\newblock
    \iffieldundef{volume}
      {}
      {\printfield{volume}%
           \printfield{part}%
           \setunit{\addcolon\space}}}}%
  \usebibmacro{title}%
  \iffieldundef{edition}
    {}%
    {\addcomma}%
  \newunit%
  }

\usepackage[auth-sc]{authblk}
\newcommand{\email}[1]{\href{mailto:#1}{\nolinkurl{#1}}}
\renewcommand*\Affilfont{\normalfont\normalsize}
\newcommand\affilcr{\protect\\ \protect\Affilfont}
\makeatletter
  \renewcommand\AB@affilsepx{\protect\\[0.5em]}
\makeatother

\begin{document}

\author{Minh N. B\`ui}
\affil{%
  Universit\"at Graz
  \affilcr
  Institut f\"ur Mathematik und Wissenschaftliches Rechnen, NAWI Graz
  \affilcr
  Heinrichstra\ss{}e 36, 8010 Graz, Austria
  \affilcr
  \email{minh.bui@uni-graz.at}
}

\title{On a Lemma by Br\'ezis and Haraux}

\date{~}

\thispagestyle{plain.scrheadings}

\maketitle

\begin{abstract}
  We propose several applications of an often overlooked part of the
  1976 paper by Br\'ezis and Haraux, in which the Br\'ezis--Haraux
  theorem was established.
  Our results unify and extend various existing ones on the range of a
  linearly composite monotone operator and provide new insight into
  their seminal paper.
\end{abstract}

\begin{keywords}
  composite monotone operator;
  displacement operator;
  domain;
  range;
\end{keywords}

\begin{msc}
  47H04;
  47H05;
  49J53;
\end{msc}

In \cite[Th\'eor\`emes~3 et 4]{Brezis1976} Br\'ezis and Haraux
established under mild conditions on two set-valued monotone operators
$A$ and $B$ acting on a real Hilbert space the following
description for the range of their sum:
\begin{equation}
  \cran(A+B)=\overline{\ran A+\ran B}
  \quad\text{and}\quad
  \inte\ran(A+B)=\inte\brk!{\ran A+\ran B}.
\end{equation}
These two results are nowadays known as the \emph{Br\'ezis--Haraux
  theorem}, which is crucial in problems such as
establishing existence of solutions to nonlinear equations, describing
the range of monotone operators,
as well as deriving conditions for the maximal
monotonicity of a composite sum of monotone operators
\cite{Attouch1996,Jat2019,Livre1,Bauschke2016,Bauschke2020,Brezis1976,Chu1996,Simons2008}.
In some recent applications, this theorem turns out to be
insufficient and several Br\'ezis--Haraux-type results have therefore
been proposed
\cite{Combettes2023-svva,Combettes2022-siims,Jiang2023,Pennanen2001}.
In this short note, we present several applications of a powerful
--- but often overlooked --- lemma from the seminal work
\cite{Brezis1976} of Br\'ezis and Haraux (see \cref{l:1} below).
The proposed results unify and go beyond various existing ones.
In turn, we provide a new insight into \cite{Brezis1976}:
various results in the literature can, in fact, be derived from
\cref{l:1}, although the Br\'ezis--Haraux theorem does not seem to be
applicable in those scenarios.

Throughout,
\begin{equation}
  \text{$\HH$ is a real Hilbert space with scalar product
    $\scal{\Cdot}{\Cdot}$ and associated norm
    $\norm{\Cdot}$.}
\end{equation}
A set-valued operator $M\colon\HH\to 2^{\HH}$ is
called \emph{monotone} if
\begin{equation}
  \brk!{\forall (x,u)\in\gra M}
  \brk!{\forall (y,v)\in\gra M}\quad
  \scal{x-y}{u-v}\geq 0,
\end{equation}
where $\gra M=\menge{(x,u)\in\HH\times\HH}{u\in Mx}$ is the graph of
$M$. We call a monotone operator $M\colon\HH\to 2^{\HH}$
\emph{maximally monotone} if there exists no monotone operator whose
graph properly contains $\gra M$.
We shall mostly follow the notation of \cite{Livre1}, to which we refer
for background on monotone operator theory and for notions that are not
explicitly defined here.

It will be convenient to employ the following notation.

\begin{notation}
  \label{n:1}
  The interior of a subset $C$ of $\HH$ relative to the closed affine
  hull $\caff C$ of $C$, that is, the smallest closed affine subspace of
  $\HH$ containing $C$, is denoted by $\rint C$.
  We define a relation $\simeq$ on $2^{\HH}$ as follows:
  \begin{equation}
    \brk!{\forall C\in 2^{\HH}}
    \brk!{\forall D\in 2^{\HH}}\quad
    C\simeq D
    \quad\Longleftrightarrow\quad
    \begin{cases}
      \overline{C}=\overline{D}\\
      \rint C=\rint D.
    \end{cases}
  \end{equation}
\end{notation}

Additionally, we shall require the following property of the
relative interior operator $\rint$.
The transitivity of the relation $\simeq$ justifies writing
$C\simeq D\simeq E$ to designate $C\simeq D$ and $D\simeq E$,
where $C$, $D$, and $E$ are subsets of $\HH$.

\begin{lemma}
  \label{l:2}
  Let $C$ and $D$ be nonempty subsets of $\HH$ such that
  $\conv D\subset\overline{C}$ and $\rint(\conv D)\subset C\subset D$.
  Then $C\simeq D\simeq\conv D$.
\end{lemma}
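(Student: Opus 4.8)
The plan is to verify directly the two defining conditions of $\simeq$ --- equality of closures and equality of relative interiors --- simultaneously for the three sets. Writing $E:=\conv D$ for brevity, the hypotheses yield the single inclusion chain
\[
  \rint E\subset C\subset D\subset E\subset\overline{C},
\]
in which $\rint E\subset C\subset D$ is assumed outright, $D\subset\conv D=E$ is automatic, and $E=\conv D\subset\overline{C}$ is the remaining hypothesis. I would use this chain as the backbone of both parts of the proof, so that every step is a one-line application of monotonicity of either the closure or an interior operator.

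First I would settle the closures. Applying the closure operator (monotone and idempotent) to $C\subset D\subset E\subset\overline{C}$ gives $\overline{C}\subset\overline{D}\subset\overline{E}\subset\overline{\overline{C}}=\overline{C}$, hence $\overline{C}=\overline{D}=\overline{E}$. This is the first condition of $\simeq$ for every pair among $C$, $D$, $\conv D$.

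The delicate step is transferring this to the relative interiors, and this is where I expect the only genuine obstacle. The operator $\rint$ is \emph{not} monotone in general, precisely because $\rint A$ is computed inside the set-dependent space $\caff A$: for instance a boundary point $p$ of a disk satisfies $\rint\{p\}=\{p\}$, yet $p$ lies outside the relative interior of the disk, even though $\{p\}$ is contained in it. The key reduction is the observation that a closed affine subspace contains a set if and only if it contains that set's closure; thus $\caff$ depends only on the closure, and the equality $\overline{C}=\overline{D}=\overline{E}$ already forces a common closed affine hull $V:=\caff C=\caff D=\caff E$. Consequently $\rint C$, $\rint D$, and $\rint E$ are all nothing but the interior relative to the \emph{same} space $V$, which I write $\inte_V$, and this genuine interior operator \emph{is} monotone.

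With this reduction in hand, the conclusion follows by sandwiching. Since $\rint E=\inte_V E$ is open in $V$ and contained in $C$, it is contained in the largest open-in-$V$ subset of $C$, namely $\inte_V C=\rint C$; hence $\rint E\subset\rint C$. Applying monotonicity of $\inte_V$ to $C\subset D\subset E$ gives $\rint C\subset\rint D\subset\rint E$, and chaining these inclusions forces $\rint C=\rint D=\rint E$, the second condition of $\simeq$. Together with the equality of closures this establishes $C\simeq D\simeq\conv D$. Once the common closed affine hull has been extracted from the equality of closures, the remaining argument rests only on elementary properties of $\inte_V$ and goes through verbatim even when $\rint E=\emp$, in which case all three relative interiors are forced to be empty.
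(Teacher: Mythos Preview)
Your argument is correct and follows essentially the same route as the paper: obtain $\overline{C}=\overline{D}=\overline{\conv D}$ from the inclusion chain, use this to extract a common closed affine hull $V$ (the paper invokes $\caff(\conv E)=\caff E=\caff(\overline{E})$, you argue directly that $\caff$ depends only on the closure), and then sandwich within $V$ to force $\rint C=\rint D=\rint(\conv D)$. Your write-up is more detailed about why the common $V$ is the crux and why the $\rint E=\emp$ case causes no trouble, but the underlying proof is the same.
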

\begin{proof}
  Clearly $\overline{C}=\overline{D}=\cconv D$.
  Therefore, in view of\footnote{In the second equality,
    it is crucial to take the closed affine
    hull operator and not the affine hull.
    Indeed, if $\HH$ is infinite-dimensional,
    then for every proper vector subspace $E$ of $\HH$ that is dense in
    $\HH$, $\aff E=E\neq\HH=\aff(\overline{E})$.}
  \begin{equation}
    \label{e:ye21}
    \brk!{\forall E\in 2^{\HH}\setminus\set{\emp}}\quad
    \caff(\conv E)=\caff E=\caff(\overline{E}),
  \end{equation}
  the sets $C$, $D$, and $\conv D$ have the same closed affine hull.
  In turn, we work in this common closed affine hull to deduce that
  $\rint(\conv D)
  \subset\rint C
  \subset\rint D
  \subset\rint(\conv D)$, which confirms that
  $\rint C=\rint D=\rint(\conv D)$.
\end{proof}

\begin{remark}
  If $\HH$ is finite-dimensional, then
  \begin{equation}
    \label{e:43}
    \brk!{\forall C\in 2^{\HH}}
    \brk!{\forall D\in 2^{\HH}}
    \quad
    \rint(\conv D)\subset C\subset D
    \quad\Longrightarrow\quad
    C\simeq D\simeq\conv D
  \end{equation}
  due to the fact that $\brk{\forall D\in 2^{\HH}}$
  $\overline{\rint(\conv D)}=\cconv D$.
  However, this argument breaks down when $\HH$ is
  infinite-dimensional since it is possible that
  $D\neq\emp$ and $\rint(\conv D)=\emp$.
  To see this, suppose that $\HH$ is infinite-dimensional,
  let $D$ be a proper vector subspace of $\HH$ that is dense in $\HH$,
  let $u\in D\setminus\set{0}$, and set $C=\RR u$.
  Then $\rint(\conv D)=\inte D=\emp\subset C\subset D$
  and $\overline{C}=C\neq\HH=\overline{D}$.
  This necessitates the additional assumption that
  $\conv D\subset\overline{C}$ in \cref{l:2}.
\end{remark}

We are now in a position to state the aforementioned lemma by Br\'ezis
and Haraux.

\begin{lemma}[Br\'ezis--Haraux]
  \label{l:1}
  Let $M\colon\HH\to 2^{\HH}$ be a maximally monotone operator
  and let $W$ be a subset of $\HH$ such that
  \begin{equation}
    \label{e:bh}
    (\forall w\in W)(\exi z\in\HH)\quad
    \inf_{(x,u)\in\gra M}\scal{x-z}{u-w}>\minf.
  \end{equation}
  Then the following hold:
  \begin{enumerate}
    \item\label{l:1i}\textup{\cite[Lemme~1]{Brezis1976}}
      $\conv W\subset\cran M$ and $\inte\conv W\subset\ran M$.
    \item\label{l:1ii}\textup{\cite[Lemme~1']{Brezis1976}}
      Suppose that $\ran M\subset\overline{W}$.
      Then $\rint(\conv W)\subset\ran M$.
  \end{enumerate}
\end{lemma}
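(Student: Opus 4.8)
The plan is to prove the first assertion (\cref{l:1i}) by a Minty--resolvent computation for the closure part together with two structural properties of maximally monotone operators for the interior part, and then to derive the second assertion (\cref{l:1ii}) from it by passing to the closed affine hull, where the \emph{relative} interior becomes a genuine interior. For \cref{l:1i}, I would fix $w\in W$ together with $z\in\HH$ and $c\in\RR$ furnished by \eqref{e:bh}, so that $\scal{x-z}{u-w}\geq c$ for every $(x,u)\in\gra M$. Since $M$ is maximally monotone, Minty's theorem guarantees that $\Id+\gamma M$ is surjective for every $\gamma\in\RPP$, so there exists $x_\gamma\in\HH$ with $u_\gamma:=w-\gamma^{-1}(x_\gamma-z)\in Mx_\gamma$, i.e.\ $x_\gamma-z=\gamma(w-u_\gamma)$. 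Substituting into the defining inequality gives $-\gamma\norm{w-u_\gamma}^2=\scal{x_\gamma-z}{u_\gamma-w}\geq c$, whence $\norm{w-u_\gamma}^2\leq -c/\gamma\to 0$ as $\gamma\to\pinf$. As $u_\gamma\in\ran M$, this yields $w\in\cran M$, so $W\subset\cran M$. Recalling that the closure of the range of a maximally monotone operator is closed and convex \cite{Livre1}, I conclude $\cconv W\subset\cran M$, and in particular $\conv W\subset\cran M$. For the interior statement, monotonicity of the interior operator gives $\inte\conv W\subset\inte\cconv W\subset\inte\cran M$, and the range counterpart of the inclusion $\inte\cdom\subset\dom$ for maximally monotone operators \cite{Livre1}, namely $\inte\cran M\subset\ran M$, furnishes $\inte\conv W\subset\ran M$.

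For \cref{l:1ii}, the added hypothesis $\ran M\subset\overline W$ combined with $\conv W\subset\cran M$ from \cref{l:1i} forces $\cran M\subset\overline W\subset\cconv W\subset\cran M$, whence $\overline W=\cconv W=\cran M$. By \eqref{e:ye21} these sets share a common closed affine hull $V:=\caff(\conv W)=\caff(\ran M)$, so that $\rint(\conv W)$ and $\rint(\cran M)$ are both computed relative to the \emph{same} $V$; monotonicity of the interior relative to $V$ then gives $\rint(\conv W)\subset\rint(\cran M)$. It therefore suffices to prove the unconditional inclusion $\rint(\cran M)\subset\ran M$.

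To establish this I would reduce to a genuine-interior statement inside the closed subspace $V_0:=V-a$ giving the direction of $V$, where $a\in\ran M$ is arbitrary (such $a$ exists since $\gra M\neq\emp$). Writing $P$ for the orthogonal projection of $\HH$ onto $V_0$, the translated inverse $T:=M^{-1}(\Cdot+a)$ is maximally monotone with $\dom T=\ran M-a\subset V_0$, and I would consider the operator $A_0\colon V_0\to 2^{V_0}$, $y\mapsto P(Ty)$. Its maximal monotonicity on the Hilbert space $V_0$ follows from Minty's theorem once one knows that $T+N_{V_0}$ is maximally monotone, $N_{V_0}$ being the normal-cone operator of $V_0$; the latter holds because the Attouch--Br\'ezis qualification $\bigcup_{\lambda\in\RPP}\lambda(\dom T-V_0)=V_0$ is a closed subspace (here $\dom T\subset V_0$ makes the difference set equal to all of $V_0$). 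Since $\dom A_0=\dom T\subset V_0$ and $\caff(\dom T)=V_0$, the relative interior $\rint(\cdom T)$ is exactly the interior of $\cdom A_0$ computed in $V_0$; applying $\inte\cdom A_0\subset\dom A_0$ there and translating back by $a$ yields $\rint(\cran M)\subset\ran M$, which completes the proof.

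The main obstacle is precisely this relative-interior inclusion in infinite dimensions: a direct Minty argument at a point $w\in\rint(\conv W)$ stumbles on bounding the resolvent net $(x_\gamma)$, because the preimages of range points near $w$ are uncontrolled whenever $w$ is not a genuine interior point of $\cran M$. Restricting to $V_0$ is what collapses the stray directions in $V_0^{\perp}$ and restores the local boundedness underlying $\inte\cdom\subset\dom$, while the role of the hypothesis $\ran M\subset\overline W$ is exactly to align the closed affine hulls, so that the relative interior computed for $\conv W$ is the one controlled by $M$.
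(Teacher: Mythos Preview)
The paper does not supply a proof of this lemma: both parts are simply quoted from \cite[Lemme~1 and Lemme~1']{Brezis1976}, so there is no ``paper's own proof'' to compare against beyond that citation.

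Your argument is correct. The Minty--resolvent computation for $W\subset\cran M$ is the standard one (and essentially the original Br\'ezis--Haraux step): setting $x_\gamma=J_{\gamma M}(z+\gamma w)$ gives $u_\gamma=w-\gamma^{-1}(x_\gamma-z)\in Mx_\gamma$, and the bound $\norm{u_\gamma-w}^2\leq -c/\gamma$ forces $u_\gamma\to w$. Your passage from $W\subset\cran M$ to $\conv W\subset\cran M$ via convexity of $\cran M$, and then to $\inte\conv W\subset\ran M$ via the near-convexity inclusion $\inte\cran M\subset\ran M$, is legitimate because both of these facts (Rockafellar's virtual convexity of the closed range, and the interior-of-closed-domain inclusion, transported to $M^{-1}$) are established in \cite{Livre1} independently of the Br\'ezis--Haraux lemma, so there is no circularity. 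The original 1976 proof is slightly more self-contained in that it extracts the interior statement directly from a boundedness estimate on the resolvent iterates rather than invoking near-convexity as a black box, but your route is shorter once those tools are on the table.

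For \cref{l:1ii}, your alignment of closed affine hulls via $\cran M=\overline{W}=\cconv W$ is exactly the right observation, and the reduction of $\rint(\cran M)\subset\ran M$ to a genuine-interior statement on the Hilbert subspace $V_0$ through the projected operator $A_0=P\circ M^{-1}(\Cdot+a)$ is a clean and correct way to handle the infinite-dimensional case. The maximal monotonicity of $T+N_{V_0}$ does follow from the Attouch--Br\'ezis qualification as you say, since $\dom T\subset V_0$ forces $\dom T-V_0=V_0$; the transfer to maximal monotonicity of $A_0$ on $V_0$ via Minty is then routine. This matches in spirit the affine-hull reduction behind \cite[Lemme~1']{Brezis1976}, though the original argument proceeds somewhat more directly without explicitly forming a projected operator.
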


The following direct consequence of \cref{l:1} will be crucial in
establishing our results.

\begin{corollary}
  \label{c:2}
  Let $M\colon\HH\to 2^{\HH}$ be a maximally monotone operator
  and let $W$ be a subset of $\HH$ such that
  \begin{equation}
    \label{e:bh2}
    (\forall w\in W)(\exi z\in\HH)\quad
    \inf_{(x,u)\in\gra M}\scal{x-z}{u-w}>\minf.
  \end{equation}
  Suppose that $\ran M\subset W$.
  Then $\ran M\simeq W\simeq\conv W$.
\end{corollary}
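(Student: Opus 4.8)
The plan is to deduce the claim directly from \cref{l:1,l:2} by taking $C=\ran M$ and $D=W$ in \cref{l:2}. The task therefore reduces to verifying the structural hypotheses of that lemma — nonemptiness of $C$ and $D$, the inclusion $\conv D\subset\overline{C}$, and the chain $\rint(\conv D)\subset C\subset D$ — and then reading off its conclusion.

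First I would dispose of nonemptiness. Since $M$ is maximally monotone, its graph cannot be empty (otherwise a single pair could be adjoined while preserving monotonicity, contradicting maximality), so $\ran M\neq\emp$; as $\ran M\subset W$ by hypothesis, $W$ is nonempty as well. Thus $C=\ran M$ and $D=W$ are both admissible in \cref{l:2}, and the inclusion $C\subset D$ is exactly the standing assumption $\ran M\subset W$.

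The two remaining inclusions are precisely what \cref{l:1} supplies under condition \eqref{e:bh2}. Part \cref{l:1i} gives $\conv W\subset\cran M$, that is, $\conv D\subset\overline{C}$. For the relative interior, I would observe that $\ran M\subset W\subset\overline{W}$, so the extra hypothesis of part \cref{l:1ii} is met for free; that part then yields $\rint(\conv W)\subset\ran M$, that is, $\rint(\conv D)\subset C$. Combining these with $C\subset D$ produces the chain $\rint(\conv D)\subset C\subset D$ required by \cref{l:2}.

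With all hypotheses of \cref{l:2} in hand, its conclusion reads $\ran M\simeq W\simeq\conv W$, which is the assertion. The only point demanding any care is recognizing that the inclusion $\ran M\subset\overline{W}$ needed to invoke \cref{l:1ii} comes at no cost from the stronger hypothesis $\ran M\subset W$; beyond that, the corollary is a clean specialization of \cref{l:2} to the sets produced by the Br\'ezis--Haraux lemma, so I expect no genuine obstacle.
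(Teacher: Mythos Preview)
Your argument is correct and is precisely the route the paper has in mind: \cref{l:2} is stated and proved immediately before \cref{l:1} exactly so that, once \cref{l:1}\cref{l:1i,l:1ii} furnish $\conv W\subset\cran M$ and $\rint(\conv W)\subset\ran M$, the hypotheses of \cref{l:2} with $C=\ran M$ and $D=W$ are met and the corollary drops out. The paper merely calls it a ``direct consequence'' without writing out the details you have supplied.
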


\begin{remark}
  \leavevmode
  \begin{enumerate}
    \item
      \cref{l:1} remains valid in reflexive Banach spaces;
      see \cite[Lemma~2.1]{Reich1979} and
      \cite[Proposition~2.6]{Chu1996}.
    \item
      For variants and generalizations of \cref{l:1},
      see \cite[Remarque~5]{Brezis1976}.
    \item
      In \cref{c:2}, taking $W=\ran M$ yields
      \begin{equation}
        \label{e:ri}
        \rint(\ran M)=\rint\brk!{\conv(\ran M)},
      \end{equation}
      which is the relativized version of
      the identity
      $\inte(\ran M)=\inte(\conv(\ran M))$
      (see, e.g., \cite[Lemma~31.1]{Simons2008}).
      In the finite-dimensional setting,
      \cref{e:ri} goes back to the work \cite{Minty1961} of Minty and,
      thanks to \cite[Fact~6.14(i)]{Livre1}, we always have
      $\rint(\ran M)\neq\emp$.
      However, when $\HH$ is infinite-dimensional,
      \cite[Example~20.41]{Livre1} provides an example of a maximally
      monotone operator $M\colon\HH\to\HH$ whose range is a proper
      vector subspace of $\HH$ which is dense in $\HH$; in particular,
      we have $\rint(\ran M)=\inte(\ran M)=\emp$.
      In passing, we note that \cref{e:ri} can also be deduced by
      combining \cite[Corollary~3.7]{Penot2005} and
      \cite[Proposition~3.1.5]{Zalinescu2002}.
  \end{enumerate}
\end{remark}

Our first illustration of \cref{l:1} is a description of the domain of
a maximally monotone operator; see \cite[Corollary~3.7]{Penot2005} 
and \cite[Theorem~31.2]{Simons2008}.

\begin{example}
  \label{ex:9}
  Let $M\colon\HH\to 2^{\HH}$ be maximally monotone. Then
  \begin{equation}
    \label{e:4}
    \dom M\simeq
    \Menge3{z\in\HH}{(\exi w\in\HH)\,\,\inf_{(x,u)\in\gra
        M}\scal{x-z}{u-w}>\minf}.
  \end{equation}
\end{example}
\begin{proof}
  Apply \cref{c:2} to $M^{-1}$ and the set on the right-hand side of
  \cref{e:4}, and observe that it contains $\dom M=\ran M^{-1}$ due to
  the monotonicity of $M$.
\end{proof}

Next, we provide a formula for the range of a type of linearly composed
operator, which will be shown to bring together
\cite[Th\'eor\`emes~3 et 4]{Brezis1976}.
We first recall some terminologies.
A monotone operator
$M\colon\HH\to 2^{\HH}$ is said to be \emph{$3^*$-monotone} ---
in the sense of Br\'ezis and Haraux, who call this ``propri\'et\'e
$(\ast)$'' in their seminal work \cite{Brezis1976}
--- if
\begin{equation}
  (\forall x\in\dom M)(\forall u\in\ran M)\quad
  \inf_{(y,v)\in\gra M}\scal{x-y}{u-v}>\minf.
\end{equation}
Now let $\GG$ be a real Hilbert space. A set-valued operator
$L\colon\HH\to 2^{\GG}$ is said to be \emph{linear}
if its graph $\gra L=\menge{(x,u)\in\HH\times\GG}{u\in Lx}$
is a vector subspace of $\HH\times\GG$ \cite{Arens1961}.
The \emph{adjoint} of a linear operator
$L\colon\HH\to 2^{\GG}$ is the linear operator
$L^*\colon\GG\to 2^{\HH}$ whose graph is
\begin{equation}
  \label{e:adj}
  \gra L^*
  =\menge{(v,u)\in\GG\times\HH}{\brk!{\forall (x,y)\in\gra L}\,\,
    \scal{x}{u}=\scal{y}{v}}.
\end{equation}

\begin{theorem}
  \label{t:1}
  Let $(\GG_k)_{k\in K}$ be a finite family of real Hilbert spaces.
  Let $A\colon\HH\to 2^{\HH}$ be monotone and,
  for every $k\in K$, let $B_k\colon\GG_k\to 2^{\GG_k}$ be
  $3^*$-monotone and let $L_k\colon\HH\to 2^{\GG_k}$ be linear.
  Set
  \begin{equation}
    \label{e:D}
    D=\bigcap_{k\in K}L_k^{-1}(\dom B_k).
  \end{equation}
  Suppose that $A+\sum_{k\in K}L_k^*\circ B_k\circ L_k$ is maximally
  monotone. Then $D\neq\emp$ and
  \begin{equation}
    \ran\brk3{A+\sum_{k\in K}L_k^*\circ B_k\circ L_k}
    \simeq A(D)+\sum_{k\in K}L_k^*(\ran B_k).
  \end{equation}
\end{theorem}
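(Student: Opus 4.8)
The plan is to apply \cref{c:2} to the maximally monotone operator $M=A+\sum_{k\in K}L_k^*\circ B_k\circ L_k$ and the set $W=A(D)+\sum_{k\in K}L_k^*(\ran B_k)$: the conclusion $\ran M\simeq W\simeq\conv W$ of \cref{c:2} then yields in particular the asserted equivalence $\ran M\simeq W$. To invoke \cref{c:2} I must verify its two hypotheses, namely that $\ran M\subset W$ and that $W$ satisfies the Br\'ezis--Haraux condition \cref{e:bh2} relative to $M$.

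First I would dispatch the emptiness claim and the range inclusion, both of which are direct. Since $M$ is maximally monotone, $\gra M\neq\emp$, hence $\dom M\neq\emp$. Evaluating the composition shows that $x\in\dom(L_k^*\circ B_k\circ L_k)$ forces $L_k x\cap\dom B_k\neq\emp$, i.e.\ $x\in L_k^{-1}(\dom B_k)$; therefore $\dom M\subset\bigcap_{k\in K}L_k^{-1}(\dom B_k)=D$, and $D\neq\emp$ follows. For the inclusion, if $u\in Mx$, then $u=a+\sum_{k\in K}c_k$ with $a\in Ax$ and $c_k\in L_k^*(B_k(L_k x))\subset L_k^*(\ran B_k)$; since $x\in\dom M\subset D$ we have $a\in Ax\subset A(D)$, whence $u\in W$.

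The heart of the argument --- and the main obstacle --- is \cref{e:bh2}. Fix $w\in W$ and write $w=a_0+\sum_{k\in K}d_k$ with $a_0\in Ay_0$ for some $y_0\in D$ and $d_k\in L_k^*v_k$ for some $v_k\in\ran B_k$; I take $z=y_0$. Using $y_0\in D\subset L_k^{-1}(\dom B_k)$, fix once and for all a point $m_k\in L_k y_0\cap\dom B_k$ for each $k$. Now let $(x,u)\in\gra M$ be arbitrary, together with a decomposition $u=a+\sum_{k\in K}c_k$ realizing $u\in Mx$, which supplies intermediate elements $(x,\ell_k)\in\gra L_k$, $(\ell_k,b_k)\in\gra B_k$, and $(b_k,c_k)\in\gra L_k^*$. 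Splitting $\scal{x-y_0}{u-w}$ produces the term $\scal{x-y_0}{a-a_0}\geq 0$ (by monotonicity of $A$) plus the terms $\scal{x-y_0}{c_k-d_k}$.

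The key computation transports each of these last terms to the $\GG_k$ side. Since $\gra L_k^*$ is a vector subspace, $(b_k-v_k,c_k-d_k)\in\gra L_k^*$; applying the adjoint identity \cref{e:adj} to the pairs $(x,\ell_k)$ and $(y_0,m_k)$ of $\gra L_k$ and subtracting yields $\scal{x-y_0}{c_k-d_k}=\scal{\ell_k-m_k}{b_k-v_k}$. As $\ell_k,m_k\in\dom B_k$ and $b_k,v_k\in\ran B_k$, the $3^*$-monotonicity of $B_k$, applied with the fixed $m_k\in\dom B_k$ and the fixed $v_k\in\ran B_k$, gives $\scal{\ell_k-m_k}{b_k-v_k}=\scal{m_k-\ell_k}{v_k-b_k}\geq\inf_{(\zeta,\theta)\in\gra B_k}\scal{m_k-\zeta}{v_k-\theta}>\minf$, a finite bound independent of $(x,u)$ and of the chosen decomposition. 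Summing over $k$ shows $\scal{x-z}{u-w}\geq\sum_{k\in K}\inf_{(\zeta,\theta)\in\gra B_k}\scal{m_k-\zeta}{v_k-\theta}>\minf$, which is exactly \cref{e:bh2}; \cref{c:2} then closes the proof. The only genuinely delicate point is the bookkeeping of the set-valued compositions, ensuring that the constants $m_k,v_k$ depend on $w$ and $z$ but not on the running pair $(x,u)\in\gra M$.
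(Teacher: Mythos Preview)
Your proof is correct and follows essentially the same route as the paper's: both apply \cref{c:2} to $M=A+\sum_{k\in K}L_k^*\circ B_k\circ L_k$ and $W=A(D)+\sum_{k\in K}L_k^*(\ran B_k)$, establish $\dom M\subset(\dom A)\cap D$ to get $D\neq\emp$ and $\ran M\subset W$, and verify \cref{e:bh2} by choosing $z$ to be the base point in $D$ furnishing the $A$-part of $w$, then transporting each cross term to $\GG_k$ via the adjoint relation and bounding it below using the $3^*$-monotonicity of $B_k$. The only cosmetic difference is that the paper packages the adjoint step as $y_k-d_k\in L_k(x-z)$ and $u_k-q_k\in L_k^*(v_k-s_k)$ and applies \cref{e:adj} once, whereas you apply it twice and subtract---these are equivalent.
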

\begin{proof}
  We apply \cref{c:2} to
  \begin{equation}
    M=A+\sum_{k\in K}L_k^*\circ B_k\circ L_k
    \quad\text{and}\quad
    W=A(D)+\sum_{k\in K}L_k^*(\ran B_k).
  \end{equation}
  Observe that
  \begin{align}
    \emp
    &\neq\dom M
    \nonumber\\
    &=(\dom A)\cap\dom\brk3{\sum_{k\in K}L_k^*\circ B_k\circ L_k}
    \nonumber\\
    &=(\dom A)\cap\brk3{\bigcap_{k\in K}L_k^{-1}\brk!{B_k^{-1}(\dom L_k^*)}}
    \nonumber\\
    &\subset(\dom A)\cap D
  \end{align}
  and therefore $D\neq\emp$ and $\ran M\subset W$.
  Thus, it remains to verify \cref{e:bh2}.
  Towards this goal, take $w\in W$, say
  \begin{equation}
  w=p+\sum_{k\in K}q_k,
  \quad\text{where}\,\,
  \begin{cases}
    p\in Az\,\,\text{for some $z\in D$}\\
    \text{for every $k\in K$},\,\,
    q_k\in L_k^*s_k\,\,\text{for some $s_k\in\ran B_k$}.
  \end{cases}
  \end{equation}
  Since $z\in D$, it results from \cref{e:D} that,
  for every $k\in K$, there exists $d_k\in\dom B_k$ for which
  $d_k\in L_kz$.
  Now let $(x,u)\in\gra M$, say
  \begin{multline}
    u=r+\sum_{k\in K}u_k,\\
    \text{where}\,\,
    \begin{cases}
      r\in Ax\\
      \text{for every $k\in K$},\,\,
      u_k\in L_k^*v_k\,\,\text{for some $(y_k,v_k)\in\gra B_k$
        with $y_k\in L_kx$}.
    \end{cases}
  \end{multline}
  By linearity,
  \begin{equation}
    \brk{\forall k\in K}\quad
    y_k-d_k\in L_k(x-z)
    \quad\text{and}\quad
    u_k-q_k\in L_k^*(v_k-s_k).
  \end{equation}
  Thus, we derive from the monotonicity of $A$ and \cref{e:adj} that
  \begin{align}
    \label{e:11}
    \scal{x-z}{u-w}
    &=\scal{x-z}{r-p}+\sum_{k\in K}\scal{x-z}{u_k-q_k}
    \nonumber\\
    &\geq\sum_{k\in K}\scal{y_k-d_k}{v_k-s_k}
    \nonumber\\
    &\geq\sum_{k\in K}\inf_{(b_k,f_k)\in\gra B_k}\scal{b_k-d_k}{f_k-s_k}.
  \end{align}
  Hence, taking the infimum over $(x,u)\in\gra M$ and appealing to the
  $3^*$-monotonicity of the operators $(B_k)_{k\in K}$, we obtain
  \begin{equation}
    \inf_{(x,u)\in\gra M}\scal{x-z}{u-w}
    \geq\sum_{k\in K}\inf_{(b_k,f_k)\in\gra B_k}\scal{b_k-d_k}{f_k-s_k}
    >\minf,
  \end{equation}
  which concludes the proof.
\end{proof}

\begin{remark}
  \label{r:2}
  Here are several observations on \cref{t:1}.
  \begin{enumerate}
    \item
      Letting $m=2$, $\GG_1=\GG_2=\HH$, $L_1=L_2=\Id$, and $A=0$,
      we recover \cite[Th\'eor\`eme~3]{Brezis1976}
      and its finite-dimensional version
      \cite[Theorem~3.13(i)]{Bauschke2013}.
    \item
      Letting $m=1$, $\GG_1=\HH$, and $L_1=\Id$, as well as assuming, in
      addition, that $\dom A\subset\dom B_1$,
      we recover \cite[Th\'eor\`eme~4]{Brezis1976}
      and its finite-dimensional version
      \cite[Theorem~3.13(ii)]{Bauschke2013}.
    \item
      Suppose additionally that, for every $k\in K$,
      $L_k\colon\HH\to\GG_k$ is bounded.
      Then letting $A=0$ yields at once
      \cite[Lemma~2.5]{Combettes2022-siims}.
      Further specialization yields
      \cite[Theorem~4.5(i)]{Bauschke2013},
      \cite[Lemma~3.1(ii)]{Bauschke2018}, and
      \cite[Theorem~5.1]{Bauschke2020}.
    \item
      \cref{t:1} remains valid in reflexive real Banach spaces.
      Indeed, let $\XX$ be a reflexive real Banach space
      with topological dual $\XX^*$, and let
      $A\colon\XX\to 2^{\XX^*}$ be monotone.
      Further, let $(\YY_k)_{k\in K}$ be a finite family of reflexive
      real Banach spaces and, for every $k\in K$, let
      $B_k\colon\YY_k\to 2^{\YY_k^*}$ be $3^*$-monotone
      in the sense that it is monotone and satisfies
      \begin{equation}
        (\forall y\in\dom B_k)(\forall y^*\in\ran B_k)\quad
        \inf_{(z,z^*)\in\gra B_k}\pair{y-z}{y^*-z^*}>\minf,
      \end{equation}
      and let $L_k\colon\XX\to 2^{\YY_k}$ be linear
      in the sense that
      $\gra L_k=\menge{(x,y)\in\XX\times\YY_k}{y\in L_kx}$
      is a vector subspace of $\XX\times\YY_k$.
      Suppose that $A+\sum_{k\in K}L_k^*\circ B_k\circ L_k$ is maximally
      monotone, where for every $k\in K$, the linear operator
      $L_k^*\colon\YY_k^*\to 2^{\XX^*}$ is defined via its graph
      \begin{equation}
        \gra L_k^*=\menge{(y^*,x^*)\in\YY_k^*\times\XX^*}{
          \brk!{\forall (x,y)\in\gra L_k}\,\,
          \pair{x}{x^*}=\pair{y}{y^*}}.
      \end{equation}
      (Here $\pair{\Cdot}{\Cdot}$ stands for the canonical pairing
      between a Banach space and its topological dual.)
      Then
      \begin{equation}
        \ran\brk3{A+\sum_{k\in K}L_k^*\circ B_k\circ L_k}
        \simeq A(D)+\sum_{k\in K}L_k^*(\ran B_k),
      \end{equation}
      where $D=\bigcap_{k\in K}L_k^{-1}(\dom B_k)$.
      To prove this, argue as in the proof of \cref{t:1}
      and use \cite[Lemma~2.1]{Reich1979} and
      \cite[Proposition~2.6]{Chu1996} instead of \cref{c:2}.
      This result subsumes, in particular,
      \cite[Theorem~5]{Pennanen2001}.
    \item
      Sufficient conditions for the maximal monotonicity
      of $A+\sum_{k\in K}L_k^*\circ B_k\circ L_k$ can be derived from
      the results of \cite{Pennanen2000}.
  \end{enumerate}
\end{remark}

\begin{example}
  \label{ex:3}
  We utilize the example given in \cite[Remarque~8]{Brezis1976}
  to provide a situation where the assumption of \cref{t:1} is
  fulfilled, but the conclusions of
  \cite[Th\'eor\`emes~3 et 4]{Brezis1976} fail.
  Consider the $(\pi/2)$-rotation operator
  \begin{equation}
    A\colon\RR^2\to\RR^2\colon
    (\xi_1,\xi_2)\mapsto
    (-\xi_2,\xi_1)
  \end{equation}
  and the normal cone operator to $\RR\times\set{0}$
  \begin{equation}
    B=N_{\RR\times\set{0}}\colon\RR^2\to 2^{\RR^2}\colon
    (\xi_1,\xi_2)\mapsto
    \begin{cases}
      \set{0}\times\RR,&\text{if}\,\,\xi_2=0;\\
      \emp,&\text{otherwise}.
    \end{cases}
  \end{equation}
  Then
  \begin{equation}
    \ran(A+B)
    =\set{0}\times\RR
    \neq\RR^2
    =\ran A+\ran B.
  \end{equation}
  On the other hand, $A(\dom B)=A(\RR\times\set{0})=\set{0}\times\RR$
  and thus
  \begin{equation}
    A(\dom B)+\ran B=\set{0}\times\RR=\ran(A+B).
  \end{equation}
\end{example}

\begin{corollary}
  \label{c:1}
  Let $A\colon\HH\to 2^{\HH}$ be monotone and
  let $B\colon\HH\to 2^{\HH}$ be $3^*$-monotone and surjective.
  Suppose that $A+B$ is maximally monotone.
  Then $A+B$ is surjective.
\end{corollary}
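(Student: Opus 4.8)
The plan is to specialize \cref{t:1} to a single operator. Concretely, I would take the index set $K$ to be a singleton, put $\GG_1=\HH$, $B_1=B$, and $L_1=\Id$, and keep the given monotone operator $A$. With these choices $L_1^*\circ B_1\circ L_1=B$, so the composite operator appearing in \cref{t:1} is exactly $M=A+B$, which is maximally monotone by hypothesis, and the set in \cref{e:D} becomes $D=\Id^{-1}(\dom B)=\dom B$. Since $B$ is $3^*$-monotone, all hypotheses of \cref{t:1} are met, and it yields
\begin{equation}
  \ran(A+B)\simeq A(\dom B)+\ran B.
\end{equation}

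Next I would simplify the right-hand side using surjectivity of $B$, which gives $\ran B=\HH$. To conclude that $A(\dom B)+\HH=\HH$ it suffices to know that $A(\dom B)\neq\emp$. This is already built into the proof of \cref{t:1}, where it is shown that $\emp\neq\dom M\subset(\dom A)\cap D$; in our setting this reads $\emp\neq\dom(A+B)\subset(\dom A)\cap(\dom B)$, so any $z$ in this intersection satisfies $Az\neq\emp$ with $Az\subset A(\dom B)$. Hence $A(\dom B)$ is nonempty, the right-hand side collapses to $\HH$, and we obtain $\ran(A+B)\simeq\HH$.

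Finally I would convert this equivalence into a genuine equality. By \cref{n:1}, $\ran(A+B)\simeq\HH$ entails $\rint(\ran(A+B))=\rint\HH$. Since the closed affine hull of $\HH$ is $\HH$ itself, $\rint\HH=\HH$, and because the relative interior of any set is contained in that set, $\HH=\rint(\ran(A+B))\subset\ran(A+B)\subset\HH$. Therefore $\ran(A+B)=\HH$, i.e., $A+B$ is surjective. I expect the only delicate point to be this last step: surjectivity does \emph{not} follow from the closure part of $\simeq$ (which merely gives density of the range), but genuinely requires the relative-interior part together with the elementary inclusion $\rint C\subset C$. Everything else is a routine specialization of \cref{t:1} and the bookkeeping needed to see that $A(\dom B)$ is nonempty.
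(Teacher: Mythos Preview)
Your proposal is correct and matches the paper's intended approach: the corollary is stated there without proof, immediately after \cref{t:1} and its discussion, so specializing \cref{t:1} with a single index, $\GG_1=\HH$, $L_1=\Id$, $B_1=B$ is precisely what is meant. Your handling of the last step---extracting genuine surjectivity from $\ran(A+B)\simeq\HH$ via the relative-interior part of $\simeq$ rather than the closure part---is exactly the right observation, and your verification that $A(\dom B)\neq\emp$ via $\emp\neq\dom(A+B)\subset(\dom A)\cap(\dom B)$ is clean.
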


\begin{example}[range of Kuhn--Tucker operator]
  \label{ex:1}
  Let $\GG$ be a real Hilbert space,
  let $A\colon\HH\to 2^{\HH}$ and $B\colon\GG\to 2^{\GG}$ be
  maximally monotone,
  let $L\colon\HH\to\GG$ be linear and bounded,
  and let
  \begin{equation}
    \kt\colon\HH\times\GG\to 2^{\HH\times\GG}\colon
    (x,v)\mapsto (Ax+L^*v)\times(-Lx+B^{-1}v)
  \end{equation}
  be the \emph{Kuhn--Tucker operator} associated with
  the problem of finding a zero of
  $A+L^*\circ B\circ L$ \cite{Combettes2024-acnu}.
  Then the following hold:
  \begin{enumerate}
    \item\label{ex:1i}
      Suppose that $B$ is $3^*$-monotone and set
      $T\colon\HH\times\HH\to\HH\times\GG\colon (x,u)\mapsto(u,-Lx)$.
      Then
      \begin{equation}
        \ran\kt\simeq
        T\brk{\gra A}+\brk!{L^*(\ran B)}\times\dom B.
      \end{equation}
    \item\label{ex:1ii}
      Suppose that $A$ and $B$ are $3^*$-monotone. Then
      \begin{equation}
        \label{e:6}
        \ran\kt\simeq
        \brk!{\ran A+L^*(\ran B)}\times\brk!{-L(\dom A)+\dom B}.
      \end{equation}
  \end{enumerate}
\end{example}
\begin{proof}
  We denote by $\HHH$ the Hilbert direct sum of $\HH$ and $\GG$.
  On account of \cite[Proposition~26.32(iii)]{Livre1}, $\kt$ is
  maximally monotone.

  \cref{ex:1i}:
  We deduce from \cref{t:1} applied to the
  monotone operator
  $\boldsymbol{A}\colon\HHH\to 2^{\HHH}\colon
  (x,v)\mapsto\brk{Ax+L^*v}\times\set{-Lx}$
  and the $3^*$-monotone operator
  $\boldsymbol{B}\colon\HHH\to 2^{\HHH}\colon(x,v)\mapsto
  \set{0}\times B^{-1}v$ that
  \begin{align}
    \ran\kt
    &=\ran(\boldsymbol{A}+\boldsymbol{B})
    \nonumber\\
    &\simeq
    \boldsymbol{A}\brk!{(\dom\boldsymbol{A})\cap(\dom\boldsymbol{B})}
    +\set{0}\times\dom B
    \nonumber\\
    &=\boldsymbol{A}\brk!{(\dom A)\times(\ran B)}
    +\set{0}\times\dom B
    \nonumber\\
    &=\menge{(u+L^*v,-Lx)}{(x,u)\in\gra A\,\,\text{and}\,\,v\in\ran B}
    +\set{0}\times\dom B
    \nonumber\\
    &=\menge{(u,-Lx)}{(x,u)\in\gra A}+\brk!{L^*(\ran B)}\times\dom B
    \nonumber\\
    &=T\brk{\gra A}+\brk!{L^*(\ran B)}\times\dom B,
  \end{align}
  as claimed.

  \cref{ex:1ii}:
  Apply \cref{t:1} to the monotone operator
  $\boldsymbol{A}\colon\HHH\to\HHH\colon
  (x,v)\mapsto(L^*v,-Lx)$ and the $3^*$-monotone operator
  $\boldsymbol{B}\colon\HHH\to 2^{\HHH}\colon
  (x,v)\mapsto Ax\times B^{-1}v$.
\end{proof}

\begin{remark}
  The part
  \begin{equation}
    \cran\kt
    =\overline{\brk!{\ran A+L^*(\ran B)}\times\brk!{-L(\dom A)+\dom B}}
  \end{equation}
  in the conclusion of \cref{ex:1}\cref{ex:1ii}
  is a main result of \cite{Jiang2023}.
\end{remark}

Our next application of \cref{l:1} concerns the range of the so-called
displacement  operator, which has been the underlying topic of several
recent papers; see, e.g.,
\cite{Bauschke2016,Bauschke2020,Moursi2022} and the references therein.
Let us preface this with some background.
Given a maximally monotone operator $M\colon\HH\to 2^{\HH}$,
the \emph{resolvent} of $M$ is the single-valued operator
\begin{equation}
  J_M=(\Id+M)^{-1}
\end{equation}
and we have $\ran J_M=\dom M$, $\ran J_{M^{-1}}=\ran M$, and
\begin{equation}
  \label{e:7}
  J_{M^{-1}}=\Id-J_M
  \quad\text{and}\quad
  (\forall x\in\HH)\;\;\brk!{J_Mx,J_{M^{-1}}x}\in\gra M;
\end{equation}
additionally, the \emph{reflected resolvent} of $M$ is
\begin{equation}
  R_A=2J_A-\Id.
\end{equation}

\begin{theorem}
  \label{t:2}
  Let $A\colon\HH\to 2^{\HH}$ and $B\colon\HH\to 2^{\HH}$ be maximally
  monotone, and denote by $T$ the \emph{Douglas--Rachford operator}
  associated with $(A,B)$, that is,
  \begin{equation}
    T=\Id-J_A+J_B\circ R_A.
  \end{equation}
  Suppose that $W$ is a subset of $\HH$
  which contains $\ran(\Id-T)$ and satisfies
  \begin{equation}
    \label{e:2}
    (\forall w\in W)\brk!{\exi(a,b,p,q)\in\HH^4}\quad
    \begin{cases}
      w=a-b=p+q\\
      \displaystyle
      \inf_{(x,u)\in\gra A}\scal{x-a}{u-p}>\minf\\
      \displaystyle
      \inf_{(y,v)\in\gra B}\scal{y-b}{v-q}>\minf.
    \end{cases}
  \end{equation}
  Then $\ran(\Id-T)\simeq W\simeq\conv W$.
\end{theorem}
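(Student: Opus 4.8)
The plan is to apply \cref{c:2} to the set $W$ together with a maximally monotone operator $M$ whose range is precisely $\ran(\Id-T)$. The natural candidate comes from the fact that the Douglas--Rachford operator is a resolvent. Indeed, since $T=\tfrac12(\Id+R_BR_A)$ with $R_A$ and $R_B$ nonexpansive (here $R_B=2J_B-\Id$), the operator $T$ is firmly nonexpansive, so $M:=T^{-1}-\Id$ is maximally monotone and $T=J_M$ (see \cite{Livre1}). Consequently $\Id-T=\Id-J_M=J_{M^{-1}}$, whence $\ran(\Id-T)=\ran J_{M^{-1}}=\ran M$. Since by hypothesis $\ran(\Id-T)\subset W$, the inclusion $\ran M\subset W$ required by \cref{c:2} holds, and it remains only to verify \cref{e:bh2} for $M$ and $W$.

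To this end I would make the graph of $M$ explicit. Because $T=J_M$ has full domain, $\gra M=\menge{(Tc,(\Id-T)c)}{c\in\HH}$, so the infimum in \cref{e:bh2} is an infimum over $c\in\HH$. For fixed $c$, set $x_A=J_Ac$ and $u_A=c-J_Ac$, so that $(x_A,u_A)\in\gra A$; likewise set $x_B=J_BR_Ac$ and $u_B=R_Ac-x_B$, so that $(x_B,u_B)\in\gra B$. A short computation then gives $R_Ac=x_A-u_A=x_B+u_B$, together with $Tc=u_A+x_B$ and $(\Id-T)c=x_A-x_B=u_A+u_B$. Thus the displacement $(\Id-T)c$ is simultaneously the primal gap $x_A-x_B$ and the dual sum $u_A+u_B$, which is exactly the structure mirrored by the decomposition $w=a-b=p+q$ appearing in \cref{e:2}.

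The decisive step is to choose, for a given $w\in W$ with associated $(a,b,p,q)$ as in \cref{e:2}, the reference point $z=p+b$. Then $Tc-z=(u_A-p)+(x_B-b)$ and $(\Id-T)c-w=(u_A-p)+(u_B-q)$. Using $x_B+u_B=x_A-u_A$ and $b+q=a-p$ (the latter being a restatement of $p+q=a-b$), one obtains $(x_B-b)+(u_B-q)=(x_A-a)-(u_A-p)$, and a direct expansion of the inner product collapses to
\begin{equation*}
  \scal{Tc-z}{(\Id-T)c-w}
  =\scal{x_A-a}{u_A-p}+\scal{x_B-b}{u_B-q}.
\end{equation*}
Both summands are bounded below uniformly in $c$ by the two infimum conditions in \cref{e:2}, applied respectively to $(x_A,u_A)\in\gra A$ and $(x_B,u_B)\in\gra B$; hence their sum, and therefore $\scal{Tc-z}{(\Id-T)c-w}$, is bounded below, and \cref{e:bh2} holds with this $z$. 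Invoking \cref{c:2} then yields $\ran(\Id-T)=\ran M\simeq W\simeq\conv W$.

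I expect the main obstacle to be precisely the two middle steps: recognising that $\ran(\Id-T)$ is the range of the hidden maximally monotone operator $M=T^{-1}-\Id$, and then discovering the reference point $z=p+b$ that decouples the single, coupled inner product over $\gra M$ into the two independent monotone inner products governed by $A$ and $B$. Once this algebraic identity is in hand, the verification of \cref{e:bh2} and the appeal to \cref{c:2} are routine. A minor point to record carefully is the firm nonexpansiveness of $T$ (equivalently, that $T$ is a resolvent), which legitimises introducing $M$ and identifying $\ran(\Id-T)$ with $\ran M$.
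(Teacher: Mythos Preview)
Your argument is correct and follows the same overall strategy as the paper: pick a maximally monotone operator whose range is $\ran(\Id-T)$, verify condition~\cref{e:bh2} by splitting the inner product into an $A$-part and a $B$-part, and invoke \cref{c:2}. The details differ in an instructive way, though. The paper works with $M=\Id-T$ itself (which is maximally monotone because $T$ is firmly nonexpansive), chooses the reference point $z=a+p$, and uses the polarisation-type identity
\[
\scal{r}{s-t}=\norm{s-t}^2+\scal{s}{r-s}+\scal{t}{2s-t-r}
\]
with $r=x-(a+p)$, $s=J_Ax-a$, $t=J_B(R_Ax)-b$ to obtain an \emph{inequality} (the extra $\norm{s-t}^2$ term is simply discarded). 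Your choice $M=T^{-1}-\Id$, with reference point $z=p+b$, is slightly slicker: because $\gra M=\menge{(Tc,(\Id-T)c)}{c\in\HH}$ you land on the exact \emph{equality}
\[
\scal{Tc-z}{(\Id-T)c-w}=\scal{x_A-a}{u_A-p}+\scal{x_B-b}{u_B-q},
\]
with no leftover term. The two computations are in fact linked: the paper's left-hand side exceeds yours by exactly $\norm{(\Id-T)c-w}^2$, which is the $\norm{s-t}^2$ term it drops. Either route proves the theorem; yours avoids the auxiliary three-vector identity at the cost of introducing the second operator $T^{-1}-\Id$.
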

\begin{proof}
  It results from \cite[Proposition~26.1(iii)(a) and
  Example~20.29]{Livre1} that $M=\Id-T$
  is maximally monotone.
  Now take $w\in W$ and let $(a,b,p,q)\in\HH^4$ be such that
  $w=a-b=p+q$, and that
  \begin{equation}
    \label{e:8}
      \inf_{(x,u)\in\gra A}\scal{x-a}{u-p}>\minf
      \quad\text{and}\quad
      \inf_{(y,v)\in\gra B}\scal{y-b}{v-q}>\minf.
  \end{equation}
  For every $x\in\HH$, applying the identity
  \begin{equation}
    (\forall r\in\HH)(\forall s\in\HH)(\forall t\in\HH)\quad
    \scal{r}{s-t}=\norm{s-t}^2+\scal{s}{r-s}+\scal{t}{2s-t-r}
  \end{equation}
  to $r=x-(a+p)$, $s=J_Ax-a$, and $t=J_B(R_Ax)-b$, as well as observing
  that $s-t=x-Tx-a+b=Mx-w$, we derive from the resolvent identity
  in \cref{e:7} that
  \begin{align}
    &\scal{x-(a+p)}{Mx-w}
    \nonumber\\
    &\hskip 8mm
    =\norm{Mx-w}^2
    +\scal{J_Ax-a}{J_{A^{-1}}x-p}
    +\scal{J_B(R_Ax)-b}{J_{B^{-1}}(R_Ax)-q}
    \nonumber\\
    &\hskip 8mm
    \geq\scal{J_Ax-a}{J_{A^{-1}}x-p}
    +\scal{J_B(R_Ax)-b}{J_{B^{-1}}(R_Ax)-q}.
  \end{align}
  Hence, using \cref{e:7,e:8}, we obtain
  \begin{align}
    &\inf_{x\in\HH}\scal{x-(a+p)}{Mx-w}
    \nonumber\\
    &\hskip 8mm
    \geq\inf_{x\in\HH}\scal{J_Ax-a}{J_{A^{-1}}x-p}
    +\inf_{x\in\HH}\scal{J_B(R_Ax)-b}{J_{B^{-1}}(R_Ax)-q}
    \nonumber\\
    &\hskip 8mm
    \geq\inf_{(x,u)\in\gra A}\scal{x-a}{u-p}
    +\inf_{(y,v)\in\gra B}\scal{y-b}{v-q}
    \nonumber\\
    &\hskip 8mm
    >\minf.
  \end{align}
  Consequently, \cref{c:2} yields the conclusion.
\end{proof}

\begin{remark}
  \label{r:1}
  We recover at once various results on the range of the
  Douglas--Rachford operator by choosing suitable instances of $W$ in
  \cref{t:2}. In fact, consider the operators $A$, $B$, and $T$ as in
  \cref{t:2}. Then, since
  \begin{equation}
   \Id-T=J_A-J_B\circ R_A=J_{A^{-1}}+J_{B^{-1}}\circ R_A,
  \end{equation}
  we get
  \begin{equation}
    \label{e:3}
    \ran(\Id-T)\subset(\dom A-\dom B)\cap(\ran A+\ran B).
  \end{equation}
  \begin{enumerate}
    \item\label{r:1i}
      Suppose that $A$ and $B$ are $3^*$-monotone.
      Then clearly
      \begin{equation}
        W=(\dom A-\dom B)\cap(\ran A+\ran B)
      \end{equation}
      satisfies \cref{e:2}.
      Thus, in view of \cref{e:3}, we deduce from \cref{t:2} that
      \begin{equation}
        \ran(\Id-T)
        \simeq(\dom A-\dom B)\cap(\ran A+\ran B),
      \end{equation}
      and we therefore recover \cite[Theorem~2.5(i)]{Moursi2022}
      and \cite[Theorem~5.2(ii)]{Bauschke2016}.
    \item\label{r:1ii}
      Suppose that $A$ is $3^*$-monotone with $\dom A=\HH$.
      Then
      \begin{equation}
        W=\ran A+\ran B
      \end{equation}
      satisfies \cref{e:2}. In fact, let $w\in W$, say
      $w=p+q$, where $p\in\ran A$ and $q\in\ran B$.
      In turn, let $b\in\HH$ be such that $(b,q)\in\gra B$,
      and set $a=b+w$.
      Then $a\in\HH=\dom A$ and the $3^*$-monotonicity of $A$ yields
      $\inf_{(x,u)\in\gra A}\scal{x-a}{u-p}>\minf,$ while the
      monotonicity of $B$ ensures that
      $\inf_{(y,v)\in\gra B}\scal{y-b}{v-q}=0$.
      Hence, it follows from \cref{e:3} and \cref{t:2} that
      \begin{equation}
        \ran(\Id-T)\simeq\ran A+\ran B,
      \end{equation}
      which shows that \cite[Theorem~2.5(ii)]{Moursi2022} is a special
      case of \cref{t:2}. Nevertheless, only the identity
      $\cran(\Id-T)=\overline{\ran A+\ran B}$ is established in
      \cite[Theorem~2.5(ii)]{Moursi2022}.
    \item Along the same lines of \cref{r:1ii},
      one can verify that, if
      $A$ is $3^*$-monotone with $\ran A=\HH$, then $W=\dom A-\dom B$
      satisfies \cref{e:2}, and \cref{t:2} therefore yields
      \cite[Theorem~2.5(iii)]{Moursi2022},
      where only the identity
      $\cran(\Id-T)=\overline{\dom A-\dom B}$ is established.
    \item Suppose that there exist proper lower semicontinuous convex
      functions $\varphi\colon\HH\to\RX$ and
      $\psi\colon\HH\to\RX$ such that
      $A=\partial\varphi$ and $B=\partial\psi$.
      Then
      \begin{equation}
        W=(\dom\varphi-\dom\psi)\cap(\dom\varphi^*+\dom\psi^*)
      \end{equation}
      satisfies \cref{e:2}. Indeed, this follows from the fact that,
      given a proper lower semicontinuous convex function
      $\theta\colon\HH\to\RX$, we have \cite[Example~20.55]{Livre1}
      \begin{equation}
        (\forall y\in\dom\theta)(\forall v\in\dom\theta^*)
        \quad
        \inf_{(x,u)\in\gra\partial\theta}\scal{x-y}{u-v}>\minf.
      \end{equation}
      \cref{t:2} therefore subsumes
      \cite[Theorem~3.3(i)]{Moursi2022}
      and \cite[Corollary~6.5(i)]{Bauschke2016}.
  \end{enumerate}
\end{remark}

We end this note with an illustration of \cref{t:2} which extends
\cite[Theorem~3.2]{Bauschke2020}.

\begin{example}
  \label{ex:2}
  Let $A\colon\HH\to 2^{\HH}$ be $3^*$-monotone and maximally monotone
  with $\dom A=\HH$, and let and $B\colon\HH\to 2^{\HH}$ be maximally
  monotone. Then
  \begin{equation}
    \label{e:5}
    \ran(\Id-R_B\circ R_A)\simeq\ran(\Id-R_A)+\ran(\Id-R_B).
  \end{equation}
\end{example}
\begin{proof}
  We deduce from \cref{t:2} and \cref{r:1}\cref{r:1ii} that
  \begin{equation}
    \ran(J_A-J_B\circ R_A)
    =\ran(\Id-T)
    \simeq\ran A+\ran B.
  \end{equation}
  On the other hand, a simple algebraic manipulation gives
  \begin{equation}
    \begin{cases}
    \Id-R_B\circ R_A=2(J_A-J_B\circ R_A)\\
    \ran(\Id-R_A)=2\ran(\Id-J_A)=2\ran J_{A^{-1}}=2\ran A\\
    \ran(\Id-R_B)=2\ran(\Id-J_B)=2\ran J_{B^{-1}}=2\ran B.
  \end{cases}
  \end{equation}
  Altogether, \cref{e:5} follows.
\end{proof}

\begin{remark}
  Consider the setting of \cref{ex:2}.
  In \cite[Theorem~3.2]{Bauschke2020} the operator $A$ is further
  required to be cocoercive, and only the identity
  \begin{equation}
    \cran(\Id-R_B\circ R_A)=\overline{\ran(\Id-R_A)+\ran(\Id-R_B)}
  \end{equation}
  is established.
\end{remark}

\newpage
\printbibliography

\end{document}